\documentclass{article}
\makeatletter
\newcommand\footnotetext@\relax
\let\footnotetext@\@footnotetext
\renewcommand{\thanks}[1]{%
  \unskip\protected@xdef\@thefnmark{}%
  \protected@xdef\@thanks{\@thanks\protect\footnotetext@{#1}}}
\newcommand{\MSC}[2][2010]{%
  \unskip\protected@xdef\@thefnmark{}%
  \protect\footnotetext@{\kern-1.8em{\it MSC (#1):} #2.}}
\newcommand{\keywords}[1]{%
  \unskip\protected@xdef\@thefnmark{}%
  \protect\footnotetext@{\kern-1.8em{\it Keywords:} #1.}}
\newcommand{\address}[1]{\\[.67ex]%
  \parbox{.95\textwidth}{\centering\footnotesize\itshape#1}}
\gdef\@date{}
\makeatother
\usepackage[english]{babel}
\usepackage{amsmath}%
  \allowdisplaybreaks[4]
\usepackage{amsthm}%
  \theoremstyle{plain}
    \newtheorem{thm}{Theorem}
    \newtheorem{lem}[thm]{Lemma}
    \newtheorem{prop}[thm]{Proposition}
    \newtheorem{cor}[thm]{Corollary}
  \theoremstyle{definition}
    
\usepackage[all]{xy}%
  \entrymodifiers={!!<.em,.6ex>+}
  \SelectTips{cm}{}
\usepackage{amssymb}
\usepackage{mathrsfs}
\usepackage{hyperref}
\makeatletter
\newcommand{\aeq}{\Leftrightarrow}
\newcommand{\tto}{\rightrightarrows}
\newcommand{\xto}{\xrightarrow}
\newcommand{\xfro}{\xleftarrow}
\newcommand{\longto}{\longrightarrow}
\newcommand{\Z}{\bgroup\mathbb Z\egroup}
\newcommand{\R}{\bgroup\mathbb R\egroup}
\newcommand{\C}{\bgroup\mathbb C\egroup}
\newcommand{\T}{\bgroup\mathbb T\egroup}
\DeclareMathOperator{\End}{End}
\DeclareMathOperator{\Hom}{Hom}
\DeclareMathOperator{\supp}{supp}
\newcommand{\GL}{\operatorname{GL}}
\newcommand{\GU}{\operatorname{U}}
\newcommand{\SU}{\operatorname{SU}}
\newcommand\@d{\mathopen{\mathit d}}
\newcommand\@@d{\mkern\thinmuskip\@d}
\renewcommand{\d}{\@d}
\DeclareSymbolFont{wasyfonts}{U}{wasy}{m}{n}
\DeclareMathSymbol{\varint}{\mathop}{wasyfonts}{"72}
\newcommand\@integral\relax
\newcommand\beforestar@d\relax
\newcommand\afterstar@d\relax
\newcommand\@unstar@d\relax
\def\@unstar@d#1*d#2#3{%
   \def\beforestar@d{#1}%
  \edef\afterstar@d{#2}}
\def\@integral#1#2#3#4d{%
  \@unstar@d #4d*d\@empty\@empty
   #1\varint #2%
  \ifx\afterstar@d\@empty #4#3\@@d
  \else\beforestar@d #3\@d\fi}
\newcommand\@plain@integral{%
  \@integral{}{}{}}
\newcommand\@star@integral\relax
\def\@star@integral#1d{%
  \mathchoice{%
    \@integral\textstyle\limits\displaystyle#1d}%
   {\@integral{}\limits{}#1d}%
   {\@integral{}\limits{}#1d}%
   {\@integral{}\limits{}#1d}}
\newcommand{\integral}{%
  \@ifstar\@star@integral\@plain@integral}
\newcommand{\catfont}{\mathscr}
\newcommand{\catname}[1]{\underline{\smash[b]{\mathbf{#1}}}}
\newcommand{\Rep}{\catname{Rep}}
\newcommand{\HRep}{\catname{HRep}}
\newcommand{\DRep}{\catname{DRep}}
\DeclareMathOperator{\Zcat}{Z}
\DeclareMathOperator{\Hcat}{H}
\makeatother
\hyphenation{%
group-oid group-oids}

\title{Some remarks on representations up to homotopy%
  \MSC{22A22, 18G35, 18G55}
  \keywords{groupoids, representations up to homotopy, Peter--Weyl theorem}
}%
\author{Giorgio Trentinaglia%
  \address{Center for Mathematical Analysis, Geometry and Dynamical Systems, %
          Instituto Superior T\'ecnico, Universidade de Lisboa, %
          Av.~Rovisco Pais, 1049-001 Lisboa, Portugal}
   \and Chenchang Zhu%
  \address{Mathmatics Insitute, %
          Georg-August-Universit\"at G\"ottingen, %
          Bunsenstrasse 3-5, 37073, G\"ottingen, Germany}
  \thanks{G.~Trentinaglia acknowledges support %
         from the Portuguese Foundation for Science and Technology %
         (Fun\-da\-\c c\~ao pa\-ra a Ci\-\^en\-cia e a Tec\-no\-lo\-gia) %
         through postdoctoral grant %
         SFRH/\allowbreak BPD/\allowbreak 81810/\allowbreak 2011 %
         and, partly, through grant %
         UID/\allowbreak MAT/\allowbreak 04459/\allowbreak 2013. %
         C.~Zhu acknowledges support %
         from the German Research Foundation %
         (DFG, Deut\-sche For\-schungs\-ge\-mein\-schaft) %
         through the Institutional Strategy of the University of G\"ot\-tin\-gen.}
}%
\begin{document}
\maketitle

\begin{abstract} Motivated by the study of the interrelation between functorial and algebraic quantum field theory, we point out that on any locally trivial bundle of compact groups, representations up to homotopy are enough to separate points by means of the associated representations in cohomology. Furthermore, we observe that the derived representation category of any compact group is equivalent to the category of ordinary (finite-dimensional) representations of the group. \end{abstract}

\subsection*{Introduction}

It is well known to mathematical physicists that there is a systematic way of assigning an algebraic quantum field theory (AQFT) to a ``functorial'' quantum field theory (FQFT) \cite{Sch09}. The present paper collects some observations which are potentially relevant to the converse problem, namely, that of recovering a FQFT from the corresponding AQFT, in the spirit of Doplicher and Roberts' reconstruction theorem \cite{DR89a,DR89b,HM06}. The point we want to make here is that the notion of `representation up to homotopy', recently introduced by differential geometers \cite{AC13,AC12,ACD11}, could play a role in the study of such problem.

From a mathematical physicist's perspective, one compelling reason to consider representations up to homotopy (hereafter named `homotopy representations') is that they occur naturally as representations of higher Lie groups. Higher symmetry groups are ubiquitous in quantum field theory \cite{Sha15}. There are also situations in quantum field theory where the gauge symmetries of the local fields may be expected to constitute a groupoid, rather than a group; specifically, we have in mind the example of 2-C*-categories with non-simple units \cite{Zit07}, or quantum field theories in curved spacetimes \cite{FV,Few15}. With respect to groupoids more general than groups, the notion of homotopy representation appears to be better behaved than other notions of representation, as we shall have occasion to point out in the course of the paper.

A basic question is whether the classical Peter--Weyl theorem for compact group representations also holds for homotopy representations of more general proper groupoids. The relevance of this question from the point of view of the above-mentioned reconstruction problem is clear: the Doplicher--Roberts method shows that from the AQFT associated to a FQFT with gauge group $G$ one can extract a tensor category which is equivalent to the category of finite-dimensional representations of $G$, and then the Peter--Weyl theorem guarantees that $G$ can be recovered from that tensor category\textemdash so that no information concerning the unobservable gauge symmetries of the FQFT is lost when passing to the corresponding AQFT. Now, the naive generalization of the theorem, phrased only in terms of ordinary groupoid representations on vector bundles, is known to fail already for locally trivial bundles of compact abelian groups \cite{Tre10}. In this article, after explaining in what sense homotopy representations may be ``enough'' to separate points, we show that such representations (even just the \emph{strongly invertible} ones, i.e.~those whose underlying pseudo-representations are invertible) are indeed enough to separate points on any locally trivial bundle of compact abelian groups. (Notice that the adjoint representation of any such group bundle is trivial and thus useless.) More in general, we show that any locally trivial bundle of compact groups admits enough homotopy representations (perhaps not strongly invertible). We also highlight a topological obstruction to extending a given isotropy representation to an invertible globally defined pseudo-representation in a specific example.

In the second part of the paper, we observe that for any compact group $G$ the category $\Rep(G)$ of ordinary (finite-dimensional) representations of $G$ can be recovered from the differential graded category of homotopy representations of $G$ by means of a standard, purely categorical, procedure. This result ensures, a~priori, that the classical reconstruction theorem for compact groups can be stated alternatively in terms of homotopy representations, as it should if the approach that we foresee is correct, and also provides hints as to how to proceed in more general situations.

\paragraph*{Overall conventions.} Throughout the paper, the term `groupoid' will be synonymous with `locally compact (topological) groupoid'. By a \emph{proper} groupoid, we shall mean a locally compact Hausdorff groupoid with proper anchor map that also admits a normalized Haar measure system \cite{Tu99}. All graded vector bundles $E^\bullet=\bigoplus E^{(n)}$ will be $\Z$-graded and of finite rank, in particular, the condition $E^{(n)}=0$ will be satisfied for $|n|\gg0$. Given any pair of graded vector bundles $E^\bullet$ and $F^\bullet$ and any integer $n$, we shall let $\Hom^{(n)}(E^\bullet,F^\bullet)$ denote the vector bundle of all linear maps of degree $n$ between the fibers of $E^\bullet$ and $F^\bullet$.

\subsection*{Homotopy representations of groupoids}

Let $G$ be a groupoid, $G=G_1\tto G_0$. For each integer $k\geq2$, let $G_k$ denote the space of composable $k$-tuples of arrows in $G$:
\[
 G_k=\{(g_1,\dotsc,g_k)\in G\times\dotsb\times G\mathrel|sg_j=tg_{j+1}\;\forall j\leq k-1\}.
\]
Let $G_0\xfro{t_k}G_k\xto{s_k}G_0$ denote the two maps $(g_1,\dotsc,g_k)\mapsto tg_1$ and $\mathellipsis\mapsto sg_k$. We agree that $s_0=t_0=id$ (on $G_0$) for $k=0$. By a \emph{pseudo-representation} $\lambda$ of $G$ on a vector bundle $E$ over $G_0$ (graded or not) we shall mean a morphism of vector bundles $\lambda:s^*E\to t^*E$.

Recall that a `representation up to homotopy', hereafter called \emph{homotopy representation}, of $G$ \cite[Section~3.1]{AC13} consists of a graded vector bundle $E^\bullet$ (of finite rank) over $G_0$ and of a sequence $\{R_k\}_{k\geq0}$ of continuous sections \[R_k\in\Gamma\bigl(G_k;\Hom^{(1-k)}(s_k^*E^\bullet,t_k^*E^\bullet)\bigr)\] which satisfy the condition
\begin{multline}
\label{eqn:2011a/1}
 \sum_{j=1}^{k-1}(-1)^jR_{k-1}(g_1,\dotsc,g_jg_{j+1},\dotsc,g_k)=%
\\[-.5\baselineskip]%
 \sum_{j=0}^k(-1)^jR_j(g_1,\dotsc,g_j)\circ R_{k-j}(g_{j+1},\dotsc,g_k)
\end{multline}
for every $k\geq0$. We shall say that a homotopy representation is \emph{strongly invertible} if the (degree zero) morphism of graded vector bundles $R_1:s^*E^\bullet\to t^*E^\bullet$ is an invertible pseudo-representation, that is, an isomorphism of vector bundles over $G_1$. The notion of strong invertibility appears to be relevant especially in connection with the behavior of conjugate (or dual) representations.

Let us rename $R_0,R_1,R_2$ respectively $\partial,\lambda,\Psi$. Such auxiliary notations will make the intuitive content of the next equations clearer.
\[\xymatrix{%
 \cdots \ar@{}[d]|*{\cdots}
        \ar[r]
	& E^{(-1)} \ar[d]^(.33){\lambda^{(-1)}}
	           \ar[dl]
	           \ar[r]^-{\partial^{(-1)}}
		& E^{(0)} \ar[d]^(.33){\lambda^{(0)}}
		          \ar[dl]|-*+<.67ex>{\scriptstyle\Psi^{(0)}}
		          \ar[r]^-{\partial^{(0)}}
			& E^{(1)} \ar[d]^(.33){\lambda^{(1)}}
			          \ar[dl]|-*+<.67ex>{\scriptstyle\Psi^{(1)}}
			          \ar[r]^-{\partial^{(1)}}
				& E^{(2)} \ar[d]^(.33){\lambda^{(2)}}
				          \ar[dl]|-*+<.67ex>{\scriptstyle\Psi^{(2)}}
				          \ar[r]
					& \cdots \ar@{}[d]|*{\cdots}
					         \ar[dl]
\\
 \cdots \ar[r]
	& E^{(-1)} \ar[r]^-{\partial^{(-1)}}
		& E^{(0)} \ar[r]^-{\partial^{(0)}}
			& E^{(1)} \ar[r]^-{\partial^{(1)}}
				& E^{(2)} \ar[r]
					& \cdots
}\] In the new notations, the equations \eqref{eqn:2011a/1} for $k=0,1,2$ read as follows.
\begin{flalign}
\label{eqn:2011a/2}
 k&=0\text:%
&\makebox[.em][c]{$\partial\circ\partial=0$}
\\%
\label{eqn:2011a/3}
 k&=1\text:%
&\makebox[.em][c]{$\partial_{tg}\circ\lambda(g)=\lambda(g)\circ\partial_{sg}$}
\\%
\label{eqn:2011a/4}
 k&=2\text:%
&\makebox[.em][c]{$\lambda(g)\circ\lambda(h)-\lambda(gh)=\partial_{tg}\circ\Psi(g,h)+\Psi(g,h)\circ\partial_{sh}$}
&&
\end{flalign}
Equation~\eqref{eqn:2011a/2} expresses the condition that $\partial$ is a differential of degree $+1$ on the graded vector bundle $E^\bullet$. At each base point $x\in G_0$ one gets a cochain complex of vector spaces $(E^\bullet_x,\partial_x)$, with corresponding cohomology groups $H^i(E^\bullet_x,\partial_x)$. These cohomology groups fit together into a \emph{cohomology bundle} $H^i(E^\bullet,\partial)$, which is to be understood just as an abstract family of vector spaces indexed over $G_0$. The dimensions of the vector spaces $H^i(E^\bullet_x,\partial_x)$ may jump discontinuously across $G_0$. Equation~\eqref{eqn:2011a/3} implies that each linear map $\lambda(g):E^\bullet_{sg}\to E^\bullet_{tg}$ induces a well-defined linear map $H^i(E^\bullet_{sg},\partial_{sg})\to H^i(E^\bullet_{tg},\partial_{tg})$ in cohomology. Equation~\eqref{eqn:2011a/4} then implies that the latter linear maps form a ``representation'' of the groupoid $G$ on the cohomology bundle $H^i(E^\bullet,\partial)$, although only in a weak sense: groupoid units need not map to identity linear maps.

A strongly invertible homotopy representation is automatically \emph{unital up to homotopy}, in the following sense: because of the invertibility of the associated pseudo-representation, for each base point $x\in G_0$ we can set
\begin{equation*}
 \Upsilon(x)=\lambda(1_x)^{-1}\circ\Psi(1_x,1_x)
\end{equation*}
and then deduce the following identity from \eqref{eqn:2011a/4} [and \eqref{eqn:2011a/3}]
\begin{equation}
\label{eqn:2011a/5}
 \lambda(1_x)-id_{E^\bullet_x}=\partial_x\circ\Upsilon(x)+\Upsilon(x)\circ\partial_x.
\end{equation}
This suggests the possibility of completing the above definition of `homotopy representation' by adding the condition that there ought to be some section \[\Upsilon\in\Gamma\bigl(G_0;\End^{(-1)}(E^\bullet)\bigr)\] such that \eqref{eqn:2011a/5} is satisfied for all $x\in G_0$. (Note that $\Upsilon$ is not meant to become part of the defining data, which remain $E^\bullet$ and $\{R_k\}$.) This strengthening of the notion of homotopy representation is of course relevant to the problem addressed in the present note, because a homotopy representation that is unital up to homotopy will induce a true representation in cohomology. Starting from now, we use the term `homotopy representation' only in this stronger sense, in other words, we tacitly assume unitality up to homotopy.

\subsection*{A construction of homotopy representations}

\begin{lem}\label{lem:2011a/1} Let $G$ be a proper groupoid. Let $\lambda:s^*E\to t^*E$ be an arbitrary pseudo-representation of $G$ on a vector bundle $E$ over the base $G_0$ of $G$. Suppose that for some $G$-invariant open subset $U=GU\subset G_0$ the induced pseudo-representation $\lambda\mathbin|U$ of the restricted groupoid $G\mathbin|U\tto U$ on $E\mathbin|U$ is a representation (i.e.~respects units and composition). Then, for each point $x_0$ of $U$, there exists a homotopy representation $(E^\bullet,\{R_k\})$ of $G$ with the property that the induced isotropy representation on the degree zero cohomology group $H^0(E^\bullet_{x_0},\partial_{x_0})$ is equivalent to the isotropy representation $\lambda_{x_0}:G_{x_0}\to\GL(E_{x_0})$. Furthermore, whenever $\lambda$ is invertible, the same result can be achieved by means of a strongly invertible $(E^\bullet,\{R_k\})$. \end{lem}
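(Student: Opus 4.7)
The plan is to construct $(E^\bullet,\{R_k\})$ as an explicit two-term complex placing $E$ in degrees $-1$ and $0$, with differential given by a carefully chosen scalar cut-off, and with $R_1,R_2$ dictated by $\lambda$ and by the cocycle $c(g,h):=\lambda(g)\lambda(h)-\lambda(gh)$.

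Since $G$ is proper, the orbit space $G_0/G$ is locally compact Hausdorff. A standard use of Urysohn's lemma yields a continuous $G$-invariant function $f:G_0\to[0,1]$ that vanishes on a $G$-invariant open neighbourhood of the orbit $Gx_0$ and whose zero locus $f^{-1}(0)$ is entirely contained in $U$. I then set $E^{(-1)}=E^{(0)}=E$, take $R_0=\partial:E^{(-1)}\to E^{(0)}$ to be multiplication by $f$, let $R_1=\lambda$ act diagonally, and define $R_2=\Psi$ by placing $c(g,h)/f(tg)$ in the single non-trivial component $E^{(0)}_{sh}\to E^{(-1)}_{tg}$ (extended by zero wherever $f(tg)=0$), finally setting $R_k=0$ for $k\geq 3$.

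The crucial observation, which is the heart of the argument, is that wherever $f(tg)=0$ one has $tg\in U$, so $G$-invariance of $U$ together with composability forces both $g$ and $h$ to lie in $G\mathbin|U$; hence $c(g,h)$ already vanishes on an open neighbourhood of $(g,h)$ in $G_2$, cancelling the apparent singularity of the quotient and ensuring continuity of $\Psi$. Given this, equation~\eqref{eqn:2011a/2} is trivial, equation~\eqref{eqn:2011a/3} reduces to $f(sg)=f(tg)$ (i.e.\ $G$-invariance of $f$), and equation~\eqref{eqn:2011a/4} is the defining equation of $\Psi$; equation~\eqref{eqn:2011a/1} for $k=3$ reduces, after clearing the common denominator $f(tg_1)=f(tg_2)$, to a short algebraic identity relating $c(g_1,g_2g_3)-c(g_1g_2,g_3)$ to $\lambda(g_1)c(g_2,g_3)-c(g_1,g_2)\lambda(g_3)$, while for $k\geq 4$ both sides vanish automatically because the complex has only two graded pieces. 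Unitality up to homotopy is witnessed by $\Upsilon(x)=(\lambda(1_x)-id)/f(x)$ off $f^{-1}(0)$ and $\Upsilon\equiv 0$ on $f^{-1}(0)$, continuous for the same reason as $\Psi$ (on $U$ one has $\lambda(1_x)=id$ since $\lambda\mathbin|U$ is unital).

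Finally, since $\partial_{x_0}=f(x_0)\cdot id=0$, one has $H^0(E^\bullet_{x_0},\partial_{x_0})=E_{x_0}$ with induced isotropy action exactly $\lambda_{x_0}$; and $R_1=\lambda\oplus\lambda$ inherits invertibility from $\lambda$, yielding the strongly invertible variant. The main obstacle throughout is the continuity of $\Psi$ across the zero locus of $f$, and it is precisely the hypothesis that $\lambda\mathbin|U$ is a true representation that arranges for the numerator $c(g,h)$ to vanish wherever the denominator $f(tg)$ does.
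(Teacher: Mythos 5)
Your construction is correct and is essentially the paper's own proof: a two-term complex with both summands equal to $E$, differential given by a $G$-invariant scalar cut-off vanishing at (a neighbourhood of) the orbit of $x_0$ and with zero locus inside $U$, $R_1=\lambda$ acting diagonally, $R_2$ the multiplicativity defect divided by the cut-off and extended by zero where the defect already vanishes because $\lambda\mathbin|U$ is a representation, with the same treatment of the $k=3,4$ cocycle identities, the unitality witness $\Upsilon$, and strong invertibility. The only cosmetic differences are your grading convention (degrees $-1,0$, so $H^0(E^\bullet_{x_0},\partial_{x_0})$ arises as a cokernel rather than a kernel) and the precise normalization of the cut-off function, neither of which changes the argument.
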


\begin{proof} By properness, there exists some $G$-invariant continuous function $c:G_0\to[0,1]$ vanishing along the $G$-orbit of $x_0$ with $\supp(1-c)\subset U$; indeed, when $G$ is proper, the orbit space $G_0/G$ is locally compact Hausdorff, and the quotient projection $q:G_0\to G_0/G$ is open \cite[Lemme~6.5]{Tu99}, so for each point $p\in q(U)$ there exists some continuous function with value $1$ at $p$ whose support lies within $q(U)$. We shall regard $c$ as fixed throughout the proof.

Written out in components according to the grading, Equation~\eqref{eqn:2011a/4} reads
\begin{equation*}
 \partial_{tg}^{(n-1)}\circ\Psi^{(n)}(g,h)+\Psi^{(n+1)}(g,h)\circ\partial_{sh}^{(n)}=\lambda^{(n)}(g)\circ\lambda^{(n)}(h)-\lambda^{(n)}(gh).
\end{equation*}
In the special case of a two-term complex $E^\bullet=E^{(0)}\oplus E^{(1)}$, the only nontrivial such equations are the following two.
\begin{equation}
\label{eqn:2011a/6}
 \begin{gathered}
 \Psi^{(1)}(g,h)\circ\partial_{sh}^{(0)}=\lambda^{(0)}(g)\circ\lambda^{(0)}(h)-\lambda^{(0)}(gh)
\\%
 \partial_{tg}^{(0)}\circ\Psi^{(1)}(g,h)=\lambda^{(1)}(g)\circ\lambda^{(1)}(h)-\lambda^{(1)}(gh)
 \end{gathered}
\end{equation}
Moreover, Equation~\eqref{eqn:2011a/3} amounts to a single identity:
\begin{equation}
\label{eqn:2011a/7}
 \partial_{tg}^{(0)}\circ\lambda^{(0)}(g)=\lambda^{(1)}(g)\circ\partial_{sg}^{(0)}.
\end{equation}

So let us define our graded vector bundle $E^\bullet$ to be the direct sum of two copies of $E$ located in degrees zero and one. Define $\partial^{(n)}$ to be $c\cdot id_E$ for $n=0$ and zero for $n\neq0$. Put $\lambda^{(0)}=\lambda^{(1)}=\lambda$. The condition $0=\partial\circ\partial$ is of course satisfied. The identity \eqref{eqn:2011a/7} holds by the invariance of $c$. Again by the invariance of $c$, the two equations \eqref{eqn:2011a/6} reduce to the same identity, namely,
\begin{equation*}
 c(tg)\Psi^{(1)}(g,h)=\lambda(g)\circ\lambda(h)-\lambda(gh).
\end{equation*}
This suggests defining $\Psi^{(1)}:G_2\to\Hom(s_2^*E^{(1)},t_2^*E^{(0)})$ to be
\begin{equation*}
 \Psi^{(1)}(g,h)=%
\begin{cases}
 \text{zero}
&\text{if $g,h\in G\mathbin|U$}
\\
 \dfrac{1}{c(tg)}\bigl(\lambda(g)\circ\lambda(h)-\lambda(gh)\bigr)
&\text{if $g,h\in G\mathbin|\complement S$,}
\end{cases}
\end{equation*}
where $S=c^{-1}(0)\subset\supp(1-c)$ is a closed invariant subset of $G_0$ which is contained within $U$. Clearly $\Psi^{(1)}$ is well defined (because $\lambda\mathbin|U$ is a representation) and continuous.

The data which we have just introduced give rise to a homotopy representation provided they satisfy a few additional conditions. In detail, even though for any two-term homotopy representation one necessarily has $R_k=0$ for $k\geq3$, some of the so-called cocycle equations \eqref{eqn:2011a/1} survive, namely, for $k=3,4$:
\begin{gather*}
 R_2(g_1g_2,g_3)-R_2(g_1,g_2g_3)=R_1(g_1)\circ R_2(g_2,g_3)-R_2(g_1,g_2)\circ R_1(g_3)
\\%
 0=R_2(g_1,g_2)\circ R_2(g_3,g_4)
\end{gather*}
(for $k\geq5$ all cocycle equations are of course trivially satisfied). Spelled out in components, the latter condition reads \[0=\Psi^{(n-1)}(g_1,g_2)\circ\Psi^{(n)}(g_3,g_4),\] which is trivially satisfied since $\Psi^{(n)}=0$ for $n\neq1$. The former reduces to
\begin{equation*}
 \Psi^{(1)}(g_1g_2,g_3)-\Psi^{(1)}(g_1,g_2g_3)=\lambda(g_1)\circ\Psi^{(1)}(g_2,g_3)-\Psi^{(1)}(g_1,g_2)\circ\lambda(g_3).
\end{equation*}
For $g_1,g_2,g_3\in G\mathbin|\complement S$ this relation is true by definition of $\Psi^{(1)}$ and invariance of $c$; for $g_1,g_2,g_3\in G\mathbin|U$ it holds trivially. This concludes the proof of the lemma in the case when $\lambda$ is invertible.

When $\lambda$ is not invertible, it also becomes necessary to check unitality up to homotopy. We observe that Equation~\eqref{eqn:2011a/5} amounts to the identity
\begin{equation*}
 c(x)\Upsilon^{(1)}(x)=\lambda(1_x)-id.
\end{equation*}
We may then define $\Upsilon^{(1)}:G_0\to\End(E)$ to be zero when $x\in U$ and to be given by the preceding identity when $x\notin S$. \end{proof}

Our lemma immediately implies the existence of enough homotopy representations for any locally trivial bundle of compact groups.

\begin{prop}\label{prop:2011a/2} Let $G$ be a locally trivial bundle of compact groups. Every representation of the isotropy group $G_x$ at any base point $x$ can be realized as the degree-zero cohomology representation associated with some homotopy representation of $G$. \end{prop}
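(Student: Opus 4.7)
The plan is a direct appeal to Lemma~\ref{lem:2011a/1}. Given a (finite-dimensional) representation $\rho:G_x\to\GL(V)$, it suffices to construct a pseudo-representation $\lambda:s^*E\to t^*E$ of $G$ on some vector bundle $E$ over $G_0$ such that the restriction of $\lambda$ to some $G$-invariant open neighborhood $U$ of $x$ is a genuine representation, and such that the induced isotropy representation $\lambda_x$ is equivalent to $\rho$ under some identification $E_x\cong V$. A convenient simplification in the bundle-of-groups case is that the source and target maps coincide, so every $G$-orbit is a single point, and therefore \emph{every} open subset of $G_0$ is automatically $G$-invariant; no saturation step is required.

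The construction of $\lambda$ would proceed as follows. By local triviality, I would pick an open neighborhood $W$ of $x$ together with an isomorphism of group bundles $\Phi:G\mathbin|W\to W\times G_x$, and choose a continuous cutoff $\phi:G_0\to[0,1]$ with $\phi\equiv1$ on some open neighborhood $U$ of $x$ and $\supp\phi\subset W$. On the globally trivial vector bundle $E=G_0\times V$ I would then define
\begin{equation*}
 \lambda(g)=\begin{cases}\phi(sg)\cdot\rho\bigl(\pi_2\Phi(g)\bigr)&\text{if $g\in G\mathbin|W$,}\\0&\text{otherwise.}\end{cases}
\end{equation*}
The factor $\phi(sg)$ ensures continuity of $\lambda$ across the boundary of $G\mathbin|W$, using the boundedness of the image of $\rho$ afforded by the compactness of $G_x$. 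Over $U$ we have $\phi\equiv1$ and $\Phi$ is a group-bundle isomorphism, so $\lambda\mathbin|U$ is a genuine representation; and $\lambda_x$ coincides with $\rho$ after the obvious identification $E_x=V$.

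Feeding this $\lambda$, together with the open set $U$ and the point $x_0=x$, into Lemma~\ref{lem:2011a/1} then produces the desired homotopy representation. I do not foresee a real obstacle: the only technicality is continuity of $\lambda$ at the boundary of the trivializing neighborhood, which the cutoff handles. Note that the $\lambda$ built above vanishes outside $\supp\phi$ and is therefore not invertible, so the strongly-invertible branch of Lemma~\ref{lem:2011a/1} cannot be invoked; this explains why the proposition does not claim that the resulting homotopy representation can be made strongly invertible.
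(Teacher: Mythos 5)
Your proposal is correct and follows exactly the route the paper intends: Proposition~\ref{prop:2011a/2} is stated there as an immediate consequence of Lemma~\ref{lem:2011a/1}, and your cutoff extension of $\rho$ to a globally defined pseudo-representation that is a genuine representation over a neighborhood of $x$ (with invariance of $U$ automatic because $s=t$) is precisely the routine step the paper leaves implicit. The only cosmetic point is to normalize the trivialization $\Phi$ so that it restricts to the identity automorphism of $G_x$ over $x$ (always possible, replacing $\Phi$ by $(\mathrm{id}_W\times\Phi_x^{-1})\circ\Phi$), so that $\lambda_x$ equals $\rho$ itself rather than $\rho$ precomposed with an automorphism of $G_x$.
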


The situation appears more complicated when one restricts attention to strongly invertible homotopy representations.

\subsection*{The case of torus bundles}

\begin{lem}\label{lem:2011a/3} Let $\rho:\T^n\to\GU(k)$ be an arbitrary complex unitary representation of the $n$-torus. Then there exists some homotopy $H:\T^n\times[0,1]\to\SU(2k)$ with $H_t=%
\begin{bmatrix}
 \rho & 0
\\
    0 & \bar\rho
\end{bmatrix}$ for $t=1$ and with $H_t=id$ for $t=0$.\footnote{Notice that $\bar\rho$ coincides with the contragredient representation of $\rho$. Indeed, ${^t(\rho^{-1})}={^t(^t\bar\rho)}=\bar\rho$ since $\rho$ is a unitary matrix.} \end{lem}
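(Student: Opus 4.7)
The plan is to exploit the fact that $\mathbb{T}^n$ is compact abelian, so $\rho$ splits completely into one-dimensional characters, and then to handle each character together with its conjugate in a copy of $SU(2)\cong S^{3}$ where simple connectedness of $SU(2)$ provides a null-homotopy for free.

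\textbf{Step 1 (diagonalize).} Because $\mathbb{T}^n$ is compact and abelian, there is some $U\in\GU(k)$ and characters $\chi_1,\dots,\chi_k\colon\T^n\to\GU(1)$ such that $\rho=UDU^{-1}$ with $D(\tau)=\operatorname{diag}(\chi_1(\tau),\dots,\chi_k(\tau))$. Then
\[
 \rho\oplus\bar\rho=V\,(D\oplus\bar D)\,V^{-1},\qquad V:=U\oplus\bar U,
\]
and $V\in\SU(2k)$ since $\det V=\det U\cdot\overline{\det U}=1$. Hence it is enough to construct a homotopy in $\SU(2k)$ from $id$ to $D\oplus\bar D$ and then conjugate by $V$.

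\textbf{Step 2 (split into $\SU(2)$ blocks).} The matrix $D\oplus\bar D$ is diagonal with entries $(\chi_1,\dots,\chi_k,\bar\chi_1,\dots,\bar\chi_k)$. For each $i=1,\dots,k$, let $\SU(2)_i\subset\SU(2k)$ denote the subgroup acting nontrivially only on the span of the $i$th and $(k+i)$th coordinate vectors; these subgroups pairwise commute elementwise. The $i$th pair of diagonal entries defines a map $\T^n\to\SU(2)_i$ that factors through the diagonal embedding $j\colon\GU(1)\to\SU(2)_i$, $z\mapsto\operatorname{diag}(z,z^{-1})$, via the character $\chi_i$.

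\textbf{Step 3 (null-homotope the embedding).} The map $j\colon\GU(1)\to\SU(2)_i$ is a continuous map $S^{1}\to S^{3}$, hence null-homotopic because $\pi_1(S^{3})=0$. Pick a null-homotopy $\tilde h_i\colon[0,1]\times\GU(1)\to\SU(2)_i$ with $\tilde h_i(0,\cdot)=id$ and $\tilde h_i(1,\cdot)=j$. Set $h_i(t,\tau)=\tilde h_i(t,\chi_i(\tau))$; this is a homotopy in $\SU(2)_i$ from $id$ to $\chi_i\oplus\bar\chi_i$ at the two relevant coordinates.

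\textbf{Step 4 (assemble).} Form the pointwise product $h(t,\tau)=h_1(t,\tau)\cdots h_k(t,\tau)$, which makes sense and is continuous by commutativity of the $\SU(2)_i$'s. Then $h$ is a homotopy in $\SU(2k)$ between $id$ and $D\oplus\bar D$. Finally, $H(t,\tau):=V\,h(t,\tau)\,V^{-1}$ supplies the desired homotopy in $\SU(2k)$ from $id$ to $\rho\oplus\bar\rho$.

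\textbf{Main obstacle.} The principal point to verify, and really the only nontrivial one, is that the restriction $j\colon\GU(1)\hookrightarrow\SU(2)$ is null-homotopic; this is where the special form $\rho\oplus\bar\rho$ is used, and it is what makes $\SU(2k)$ (rather than merely $\GU(2k)$) suffice in the statement. A side check is that $V$ belongs to $\SU(2k)$, which follows immediately from $\det V=\lvert\det U\rvert^{2}=1$; this keeps the conjugation in Step 4 within $\SU(2k)$.
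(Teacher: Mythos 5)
Your proof is correct, and its core coincides with the paper's: pair each character $\chi_i$ with its conjugate $\bar\chi_i$ inside an $\SU(2)$ block and contract that block using the simple connectedness of $\SU(2)$. The differences are in how you get there. First, you make the reduction to the diagonal case explicit, writing $\rho\oplus\bar\rho=V(D\oplus\bar D)V^{-1}$ with $V=U\oplus\bar U\in\SU(2k)$; the paper tacitly assumes $\rho$ is already a direct sum of characters, so your Step~1 fills in a (routine but real) omission. Second, where the paper factors each character as $\xi_i(a)=\prod_j\xi_{ij}(a_j)$ over the $n$ circle factors of $\T^n$, precisely so that each piece becomes a homomorphism $\T^1\to\SU(2)$ to which $\pi_1(\SU(2))=0$ applies, you instead note that the $i$-th block $\operatorname{diag}(\chi_i,\bar\chi_i)$ already factors through $\GU(1)$ via $\chi_i$, so one null-homotopy of $z\mapsto\operatorname{diag}(z,\bar z)$ pulled back along $\chi_i$ does the job; this bypasses the coordinatewise decomposition entirely and is slightly cleaner. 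The factorization through a circle is indeed the crucial point (as you flag): a general map $\T^n\to\SU(2)$ with $n\geq3$ need not be null-homotopic, so quoting $\pi_1(S^3)=0$ alone would not suffice without it. Your commuting-blocks assembly in Step~4 is the same as the paper's block-diagonal product, so beyond these two points the arguments buy the same thing by the same key fact.
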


\begin{proof} Let $\xi_1,\dotsc,\xi_k:\T^n\to\GU(1)$ be any characters on the $n$-torus. For every index $i=1,\dotsc,k$ let us write
\begin{equation*}
 \xi_i(a)=\xi_i(a_1,\dotsc,a_n)=\prod_{j=1}^n\xi_{ij}(a_j),
\end{equation*}
where $\xi_{ij}:\T^1\to\GU(1)$ denotes the homomorphism \[z\mapsto\xi_i(1,\dotsc,1,z,1,\dotsc,1)\] ($j$-th place). Let us consider the matrix representation $\T^n\to\SU(2k)$ given by
\begin{equation}
\label{eqn:2011a/8}
\begin{pmatrix}
\begin{smallmatrix}
 \xi_1(a) & 0
\\      0 & \bar\xi_1(a)
\\\cline{3-2}
\end{smallmatrix}
       \vline\mkern-9mu & \hdots & \smash{\hbox{\Large0}}
\\\cline{1-1}
                 \vdots & \ddots & \vdots
\\\cline{3-3}
 \smash{\hbox{\Large0}} & \hdots & \mkern-9mu\vline
\begin{smallmatrix}
\cline{3-2}
 \xi_k(a) & 0
\\      0 & \bar\xi_k(a)
\end{smallmatrix}
\end{pmatrix}=\prod_{j=1}^n%
\begin{pmatrix}
\begin{smallmatrix}
 \xi_{1j}(a_j) & 0
\\           0 & \bar\xi_{1j}(a_j)
\\\cline{3-2}
\end{smallmatrix}
       \vline\mkern-9mu & \hdots & \smash{\hbox{\Large0}}
\\\cline{1-1}
                 \vdots & \ddots & \vdots
\\\cline{3-3}
 \smash{\hbox{\Large0}} & \hdots & \mkern-9mu\vline
\begin{smallmatrix}
\cline{3-2}
 \xi_{kj}(a_j) & 0
\\           0 & \bar\xi_{kj}(a_j)
\end{smallmatrix}
\end{pmatrix}.
\end{equation}
Each factor of the matrix product on the right defines a homomorphism $\T^1\to\SU(2)\times\dotsb\times\SU(2)$ ($k$~factors). Since $\SU(2)$ is simply connected, we may find homotopies $H_{ij}:\T^1\times[0,1]\to\SU(2)$ contracting each block $\left(%
\begin{smallmatrix}
 \xi_{ij} & 0
\\      0 & \bar\xi_{ij}
\end{smallmatrix}\right)$ to the constant identity matrix $\left(%
\begin{smallmatrix}
 1 & 0
\\
 0 & 1
\end{smallmatrix}\right)$. Therefore, letting $pr_j:\T^n\to\T^1$ denote the projection on the $j$-th factor, the homotopy
\begin{equation*}
 H_t=\prod_{j=1}^n%
\begin{bmatrix}
 (H_{1j})_t\circ pr_j & \hdots & 0
\\
               \vdots & \ddots & \vdots
\\
                    0 & \hdots & (H_{kj})_t\circ pr_j
\end{bmatrix}
\end{equation*}
will contract \eqref{eqn:2011a/8} to the constant identity matrix within $\SU(2)\times\dotsb\times\SU(2)\subset\SU(2k)$. \end{proof}

The homotopy constructed in the lemma is simply a continuous path of (invertible) pseudo-representations since the map $H_t:\T^n\to\SU(2k)$ need not be a homomorphism of groups when $0<t<1$. The existence of enough strongly invertible homotopy representations on torus bundles is now an immediate consequence of our two lemmas. Actually, we can do slightly better. Let us call \emph{essentially abelian} any group whose identity component is abelian.

\begin{prop}\label{prop:2011a/4} Let $G$ be a locally trivial bundle of compact Lie groups whose fibers are essentially abelian. Then for each base point $x_0$ there exists some invertible complex pseudo-representation of $G$ with the property that its restriction over some open neighborhood of $x_0$ is a faithful representation. \end{prop}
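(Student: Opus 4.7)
The plan is to build an invertible pseudo-representation on a trivial bundle $E=G_0\times\C^{2m}$ that reduces to a fixed faithful doubled representation of the fiber on a small neighborhood of $x_0$ and to the identity away from a slightly larger trivializing neighborhood, interpolated by a null-homotopy inside $\SU(2m)$ that generalizes Lemma~\ref{lem:2011a/3}. Concretely, by local triviality pick an open $V\ni x_0$ over which $G\mathbin|V\cong V\times K$, where $K$ is the fiber at $x_0$, a compact Lie group whose identity component $K_0$ is a torus. Choose a faithful unitary representation $\rho:K\to\GU(m)$ (which exists by Peter--Weyl) and form the doubled representation $\rho\oplus\bar\rho$; it remains faithful and lands in $\SU(2m)$ because $\det\bigl(\rho(k)\oplus\bar\rho(k)\bigr)=\lvert\det\rho(k)\rvert^2\equiv1$.

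The main step is to construct a continuous null-homotopy $H:K\times[0,1]\to\SU(2m)$ with $H_0=id$ and $H_1=\rho\oplus\bar\rho$. On $K_0$, Lemma~\ref{lem:2011a/3} applied to the torus representation $\rho\mathbin|K_0$ directly yields such an $H^0:K_0\times[0,1]\to\SU(2m)$. On every other connected component $C=g_CK_0$ of $K$, pick a basepoint $g_C$ and any continuous path $\gamma_C:[0,1]\to\SU(2m)$ from $id$ to $\rho(g_C)\oplus\bar\rho(g_C)$ (such a path exists because $\SU(2m)$ is path-connected), and define $H_t(g_Ck)=\gamma_C(t)\cdot H^0_t(k)$ for $k\in K_0$. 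Since $K$ has only finitely many components and each is clopen, these partial definitions combine into a single continuous $H$, and the identity $H_1(g_Ck)=\bigl(\rho(g_C)\oplus\bar\rho(g_C)\bigr)\cdot\bigl(\rho(k)\oplus\bar\rho(k)\bigr)=\rho(g_Ck)\oplus\bar\rho(g_Ck)$ shows that $H_1$ is indeed the doubled faithful representation. I expect this bootstrapping of Lemma~\ref{lem:2011a/3} from tori to arbitrary essentially abelian $K$ to be the main obstacle, since it requires reconciling the group-theoretic null-homotopy on $K_0$ with the purely topological choices on the other components.

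To complete the construction, choose a smaller open neighborhood $W$ of $x_0$ with $\bar W\subset V$ and a continuous cutoff $c:G_0\to[0,1]$ with $c\equiv1$ on $W$ and $\supp c\subset V$. Let $E=G_0\times\C^{2m}$ be trivial and define the pseudo-representation $\lambda:s^*E\to t^*E$ by $\lambda(g)=H_{c(x)}(k)$ whenever $g\in G\mathbin|V$ corresponds to $(x,k)\in V\times K$ under the trivialization, and by $\lambda(g)=id$ for $g$ over $G_0\setminus V$. The condition $\supp c\subset V$ ensures that $c$ vanishes in a neighborhood of $\partial V$, so both formulas agree (and equal $id$) along the transition; hence $\lambda$ is continuous on $G_1$, and it is invertible because $H_t\in\SU(2m)$ for all $t$. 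Finally, $c\equiv1$ on $W$ implies that $\lambda\mathbin|G\mathbin|W$ coincides with $\rho\oplus\bar\rho$, a faithful representation of $G\mathbin|W$, as required.
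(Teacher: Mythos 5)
Your proposal is correct and follows essentially the same route as the paper: trivialize $G$ near $x_0$, double a faithful unitary representation so it lands in the special unitary group, null-homotope it using Lemma~\ref{lem:2011a/3} on the identity-component torus together with paths in the connected unitary group on the remaining components, and glue with a cutoff function to the identity outside the trivializing neighborhood. The only cosmetic difference is that you choose the component paths $\gamma_C$ directly in $\SU(2m)$, whereas the paper takes paths $P_i$ in $\GU(N)$ from $I_N$ to $\sigma(k_i)$ and doubles them as $P_i\oplus\bar P_i$.
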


\begin{proof} By hypothesis, there must be an open neighborhood $U$ of $x_0$ such that $G\mathbin|U\simeq K\times U$, for $K$ a compact Lie group with abelian identity component $T$. Choose any faithful complex unitary representation $\sigma:K\to\GU(N)$. Also choose representatives $k_1,\dotsc,k_\nu\in K$ for the various connected components $k_1T,\dotsc,k_\nu T$ of $K$ (left cosets of $T$ within $K$). Since $\GU(N)$ is connected, for each $i=1,\dotsc,\nu$ we can find a continuous path $P_i:[0,1]\to\GU(N)$ joining $\sigma(k_i)=P_i(1)$ and the $N$-by-$N$ identity matrix $I_N=P_i(0)$.

The Lie group $T$ is compact, connected and abelian, i.e., a torus. Lemma~\ref{lem:2011a/3} applies to the induced representation $\rho=\sigma\mathbin|T$ and tells us that there exists a continuous map $H:T\times[0,1]\to\SU(2N)$ such that $H_t=%
\begin{bmatrix}
 \rho & 0
\\
      0 & \bar\rho
\end{bmatrix}$ for $t=1$ and $H_t=I_{2N}$ for $t=0$. Now, let us define $A:K\times U\to\SU(2N)$ by setting
\begin{equation*}
 A(z,u)=%
\begin{bmatrix}
 P_z\bigl(c(u)\bigr) & 0
\\
                   0 & \bar P_z\bigl(c(u)\bigr)
\end{bmatrix}H\bigl(k_z^{-1}z,c(u)\bigr)
\end{equation*}
(matrix product), where $c:U\to[0,1]$ is any compactly supported continuous function such that $c=1$ in a neighborhood of $x_0$, and $P_z$ resp.~$k_z$ denote $P_i$ resp.~$k_i$ for the unique index $i$ such that $z\in k_iT$. Then $A$ is continuous, of constant value $%
\begin{bmatrix}
 \sigma & 0
\\
      0 & \bar\sigma
\end{bmatrix}$ on some open neighborhood of $x_0$ in $U$, and of constant value $I_{2N}$ outside some compact neighborhood of $x_0$ in $U$. We define our pseudo-representation to be equal to $A$ on $G\mathbin|U\simeq K\times U$ and trivial outside $U$. \end{proof}

\begin{cor}\label{cor:2011a/5} On any locally trivial bundle of compact Lie groups with essentially abelian fibers, there exist for each base point $x_0$ strongly invertible homotopy representations with the property that the isotropy representations at $x_0$ which they induce in cohomology are faithful. \end{cor}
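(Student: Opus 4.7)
The plan is to combine Proposition~\ref{prop:2011a/4} with Lemma~\ref{lem:2011a/1} directly. Starting from the base point $x_0$, I first invoke Proposition~\ref{prop:2011a/4} to produce an invertible complex pseudo-representation $\lambda:s^*E\to t^*E$ of $G$ on some (ungraded) vector bundle $E$ over $G_0$ whose restriction over some open neighborhood $U$ of $x_0$ is a faithful representation of $G\mathbin|U$; in particular $\lambda\mathbin|U$ respects units and composition, and each isotropy map $\lambda_u:G_u\to\GL(E_u)$ for $u\in U$ is faithful.

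Next I would like to feed the data $(G,\lambda,U,x_0)$ into Lemma~\ref{lem:2011a/1}. The hypotheses there require $G$ to be a proper groupoid, which is automatic since a locally trivial bundle of compact Lie groups is proper, and require $U$ to be $G$-invariant. The latter is the only point that needs a word: in any bundle of groups the source and target maps coincide, so every $G$-orbit reduces to a single point and every subset of $G_0$ is $G$-invariant. Because $\lambda$ is invertible, the final clause of Lemma~\ref{lem:2011a/1} applies and produces a strongly invertible homotopy representation $(E^\bullet,\{R_k\})$ of $G$ whose induced isotropy representation on $H^0(E^\bullet_{x_0},\partial_{x_0})$ is equivalent to $\lambda_{x_0}:G_{x_0}\to\GL(E_{x_0})$.

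Since $x_0\in U$ and $\lambda_{x_0}$ is, by construction, a fiber of the faithful representation $\lambda\mathbin|U$, it is itself faithful. Equivalence of isotropy representations then transports faithfulness to $H^0(E^\bullet_{x_0},\partial_{x_0})$, proving the corollary.

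The substantive work is really done by Proposition~\ref{prop:2011a/4} and, behind it, Lemma~\ref{lem:2011a/3} (the nullhomotopy of $[\rho\oplus\bar\rho]$ inside $\SU(2k)$); those absorb all the genuinely topological content. Given them together with Lemma~\ref{lem:2011a/1}, the only thing one must verify for the corollary is the compatibility of the hypotheses, and the specific feature of the bundle-of-groups setting that removes the $G$-invariance hypothesis for free. I expect no real obstacle beyond noting these points.
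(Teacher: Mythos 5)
Your argument is correct and is exactly the intended route: the paper states the corollary as an immediate consequence of Proposition~\ref{prop:2011a/4} fed into the invertible case of Lemma~\ref{lem:2011a/1}, with the $G$-invariance of $U$ automatic because in a bundle of groups $s=t$ and hence every subset of the base is invariant. Your added checks (properness of the bundle, faithfulness of $\lambda_{x_0}$, transport of faithfulness along the equivalence on $H^0$) are precisely the routine verifications the paper leaves implicit.
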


For a generic torus bundle, it may happen that not every faithful representation of one fiber can be extended to an invertible pseudo-representation of the whole bundle. For instance, let $G$ be the $\T^2$-bundle over $S^1$ (the circle) arising as the quotient of the trivial $\T^2$-bundle $\T^2\times\R\to\R$ modulo the equivalence
\begin{equation}
\label{eqn:2011a/9}
 (a,b;t)\sim(a,a^lb;t+l)\makebox[.em][l]{\qquad($l\in\Z$)}
\end{equation}
(cf.~\cite[Example 2.10]{Tre10}). \em A given representation $\T^2\to\GU(k)$ may be extended to an invertible pseudo-representation of $G$ only providing that its restriction to the subgroup $1\times\T^1\subset\T^2$ lies within $\SU(k)$\em.

Indeed, suppose $\lambda:G\to\GL(E)$ is an invertible pseudo-representation of $G$ on some vector bundle $E$ over $S^1$. Its determinant $\delta=\det\lambda:G\to\C^\times$ is a continuous function into the multiplicative group of all nonzero complex numbers. We claim that the restriction of $\delta$ to the $\T^1$-subbundle of $G$ consisting of all pairs of the form $(1,b;t)$ in $\T^2\times\R$ must be a fiberwise contractible map; evidently, this will imply the desired statement about representations $\T^2\to\GU(k)$.

Regard $\delta$ as a map $\T^2\times\R\to\C^\times$ compatible with the equivalence relation \eqref{eqn:2011a/9}. The two maps $\delta_0,\delta_1:\T^2\to\C^\times$ given by $\delta_i(a,b)=\delta(a,b;i)$~[$i=0,1$] are homotopic. In particular, the two maps $\T^1\to\C^\times$ given by $a\mapsto\delta_0(a,1)$ and $a\mapsto\delta_1(a,1)$ are homotopic. By \eqref{eqn:2011a/9}, the former map coincides with $a\mapsto\delta_1(a,a)$. Now the loop in $\T^2$ given by $a\mapsto(a,a)$ is homotopic to the concatenation of the two loops $a\mapsto(a,1)$ and $a\mapsto(1,a)$; the loop $a\mapsto\delta_1(a,a)$ in $\C^\times$ is thus homotopic to the concatenation of $a\mapsto\delta_1(a,1)$ with $a\mapsto\delta_1(1,a)$. Applying the degree homomorphism $\deg:\pi_1\bigl(\C^\times,\delta_1(1,1)\bigr)\to\Z$ to these loops, we obtain
\begin{equation*}
 \deg\delta_1(-,1)=\deg\delta_1(-,1)+\deg\delta_1(1,-).
\end{equation*}
The loop $b\mapsto\delta_1(1,b)$ in $\C^\times$ must therefore be contractible.

\subsection*{The derived representation category of a compact group}

It is possible that some of the results contained in this section be already known to experts. Anyway, since we have not been able to find any references in the literature, we believe it appropriate to provide the reader with a self-contained exposition, relying on minimal background.

\begin{lem}\label{lem:2015b/6} Let $(E^\bullet,\{R_k\})$ and $(F^\bullet,\{S_k\})$ be two homotopy representations of a proper groupoid $G\tto G_0$. Suppose that $R_0=0$ and that $S_0=0$, so that $R_1$ and $S_1$ are honest (graded) representations of $G$. Let $\varPhi_0:E^\bullet\to F^\bullet$ be an equivariant vector bundle morphism of degree zero. Then, there exists a homomorphism of homotopy representations $\varPhi=\{\varPhi_k\}:(E^\bullet,\{R_k\})\to(F^\bullet,\{S_k\})$ which extends $\varPhi_0$. \end{lem}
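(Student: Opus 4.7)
The plan is to construct the higher components $\varPhi_1,\varPhi_2,\dotsc$ one at a time by induction on $k$, in the spirit of the standard extension argument for morphisms of $A_\infty$-structures. Recall that a homomorphism $\varPhi=\{\varPhi_k\}_{k\geq0}$ of homotopy representations is a family of sections $\varPhi_k\in\Gamma\bigl(G_k;\Hom^{(-k)}(s_k^*E^\bullet,t_k^*F^\bullet)\bigr)$ satisfying, for each $n\geq0$, a bar-type cocycle identity coupling $\varPhi_n$ to the lower $\varPhi_0,\dotsc,\varPhi_{n-1}$ through the structure maps $R_i$ and $S_j$, fully analogous to \eqref{eqn:2011a/1}. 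Since $R_0=S_0=0$ by hypothesis, the terms in these identities that carry the internal differentials drop out, and the equation in order $n$ rearranges into the clean form $\delta\varPhi_n=\Omega_n$, where $\delta$ denotes the bar differential of the differentiable cohomology of $G$ with values in the continuous graded representation $\Hom(E^\bullet,F^\bullet)$ (equipped with the conjugation action built from $R_1$ and $S_1$), and $\Omega_n$ is an explicit polynomial expression in $\varPhi_0,\dotsc,\varPhi_{n-1}$ and the components $R_i,S_j$ with $i,j\leq n$.

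Suppose inductively that $\varPhi_0,\dotsc,\varPhi_{n-1}$ have already been chosen so as to satisfy the first $n$ equations. Two things remain to be checked. First, the obstruction $\Omega_n$ is $\delta$-closed: this is a direct, if cumbersome, algebraic manipulation in which the higher associativity identities \eqref{eqn:2011a/1} for both $R$ and $S$, together with the lower compatibility equations enforced by the induction hypothesis, are used to rearrange all terms of $\delta\Omega_n$ into cancelling pairs, formally analogous to the corresponding step in the classical $A_\infty$-morphism extension argument. Second, the cohomology class of $\Omega_n$ in $H^{n+1}\bigl(G;\Hom^{(-n+1)}(E^\bullet,F^\bullet)\bigr)$ must be shown to vanish, so as to yield a primitive $\varPhi_n$.

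For the vanishing, one appeals to the well-known fact that, for a proper groupoid equipped with a normalized Haar system, the differentiable cohomology with coefficients in any continuous representation is concentrated in degree zero. An explicit contracting homotopy $h$ is provided by averaging along source fibers, schematically of the form
\begin{equation*}
(hc)(g_1,\dotsc,g_n)=(-1)^{n+1}\int_{G^{sg_n}}c(g_1,\dotsc,g_n,g)\,d\mu^{sg_n}(g),
\end{equation*}
so that $\varPhi_n:=h\Omega_n$ does the job. Starting the induction from the given equivariant $\varPhi_0$ guarantees that the constructed family extends $\varPhi_0$, as required.

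I expect the principal obstacle to be the first step: verifying that the obstruction $\Omega_n$ is $\delta$-closed at each stage. Although conceptually routine, the sign bookkeeping and the sheer number of terms produced by expanding the various quadratic expressions in the $R_i$, $S_j$, and $\varPhi_j$ make this the most laborious part of the argument. The cohomological vanishing and the averaging construction, by contrast, are essentially automatic consequences of the properness of $G$, and the hypothesis $R_0=S_0=0$ is what allows the equation for $\varPhi_n$ to decouple from internal differentials into the clean cohomological form $\delta\varPhi_n=\Omega_n$.
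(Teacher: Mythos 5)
Your proposal is correct and takes essentially the same route as the paper: there, too, the higher components are built inductively, with $\varPhi_{l+1}$ defined by averaging the obstruction term over the last arrow against the normalized Haar system (formula \eqref{eqn:2015b/11} is precisely your contracting homotopy, written with the factor $\lambda(g)^{-1}$ that appears when the conjugation action on $\Hom(E^\bullet,F^\bullet)$ is expressed in the component form \eqref{eqn:2015b/10}), and the required identity is then checked directly using the cocycle equations, the invariance of the Haar system, and the equivariance of $\varPhi_0$. The only point to watch is that in the component form the last face acts by $\lambda(g_k)$ on the right, so your averaging operator must indeed carry the composition with $\lambda(g)^{-1}$, exactly as in the paper.
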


\begin{proof} Let us put $\lambda=R_1$ and $\mu=S_1$ for brevity. We are looking for vector bundle morphisms $\varPhi_k:s_{k}^*E^\bullet\to t_{k}^*F^\bullet$ of degree $-k$, one for each integer $k\geq1$, such that the following identities are satisfied \cite[p.~431]{AC13}:
\begin{multline}
\label{eqn:2015b/10}
 \sum_{i+j=k}(-1)^j\varPhi_j(g_1,\dotsc,g_j)\circ R_i(g_{j+1},\dotsc,g_k)=\sum_{i+j=k}S_j(g_1,\dotsc,g_j)\circ{}%
\\[-.5\baselineskip]%
 \varPhi_i(g_{j+1},\dotsc,g_k)+\sum_{j=1}^{k-1}(-1)^j\varPhi_{k-1}(g_1,\dotsc,g_jg_{j+1},\dotsc,g_k).
\end{multline}

We argue inductively. Suppose that for some integer $l\geq0$ we are given a complete sequence of solutions $\varPhi_0,\dotsc,\varPhi_l$ to Equation~\eqref{eqn:2015b/10} for every value of $k$ between zero and $l+1$. We contend that $\varPhi_{l+1}$ defined by the integral formula
\begin{multline}
\label{eqn:2015b/11}
 \varPhi_{l+1}(g_1,\dotsc,g_{l+1})=\integral_{tg=sg_{l+1}}(-1)^l\biggl\{\sum_{j=0}^l(-1)^j\varPhi_j(g_1,\dotsc,g_j)\circ{}%
\\%
\shoveright{%
 R_{l+2-j}(g_{j+1},\dotsc,g_{l+1},g)-{}%
}\\%
 \sum_{j=2}^{l+2}S_j(g_1,\dotsc,g_j)\circ\varPhi_{l+2-j}(g_{j+1},\dotsc,g_{l+1},g)\biggr\}\circ\lambda(g)^{-1}dg
\end{multline}
is a solution to Equation~\eqref{eqn:2015b/10} for $k=l+2$.

We shall do the computation just for $l=0$, by way of example. The computation for higher values of $l$, similar but more involved, will be left to the reader's patience. (We recommend doing the cases $l=1,2$ as a preparatory exercise.) For $l=0$, our formula \eqref{eqn:2015b/11} reduces to the following expression for $\varPhi_1$:
\begin{equation*}
 \varPhi_1(g_1)=\integral_{tg=sg_1}\{\varPhi_0(tg_1)\circ R_2(g_1,g)-S_2(g_1,g)\circ\varPhi_0(sg)\}\circ\lambda(g)^{-1}dg.
\end{equation*}
By making use of the circumstance that $\lambda$ is an honest representation, of the left invariance of the chosen Haar measure system on $G$, of the $G$-equivariance of $\varPhi_0$, and of the cocycle identities \eqref{eqn:2011a/1} satisfied by $R_2$ and $S_2$, we obtain
\begin{align*}
&\mu(g_1)\circ\varPhi_1(g_2)-\varPhi_1(g_1g_2)+\varPhi_1(g_1)\circ\lambda(g_2)
\\*
&\hskip.5em=\varint\{\mkern-3mu%
 \begin{aligned}[t]
 \mu(g_1)\circ[\varPhi_0(tg_2)\circ R_2(g_2,g)%
&-S_2(g_2,g)\circ\varPhi_0(sg)]%
 \\
 {}-\varPhi_0(tg_1)\circ R_2(g_1g_2,g)%
&+S_2(g_1g_2,g)\circ\varPhi_0(sg)%
 \\
 {}+\varPhi_0(tg_1)\circ R_2(g_1,g_2g)%
&-S_2(g_1,g_2g)\circ\varPhi_0(sg)\}\circ\lambda(g)^{-1}\mkern\thinmuskip\d g
 \end{aligned}
\\%
&\hskip.5em=\varint\{\mkern-3mu%
 \begin{aligned}[t]
&\varPhi_0(tg_1)\circ[\lambda(g_1)\circ R_2(g_2,g)-R_2(g_1g_2,g)+R_2(g_1,g_2g)]%
 \\
&-[\mu(g_1)\circ S_2(g_2,g)-S_2(g_1g_2,g)+S_2(g_1,g_2g)]\circ\varPhi_0(sg)\}\circ\lambda(g)^{-1}\mkern\thinmuskip\d g
 \end{aligned}
\\%
&\hskip.5em=\integral\{\varPhi_0(tg_1)\circ R_2(g_1,g_2)\circ\lambda(g)-S_2(g_1,g_2)\circ\mu(g)\circ\varPhi_0(sg)\}\circ\lambda(g)^{-1}dg
\\\intertext{%
whence, remembering that we are using a normalized Haar measure system, and making use of the $G$-equivariance of $\varPhi_0$ one more time,
}%
&\hskip.5em=\varPhi_0(tg_1)\circ R_2(g_1,g_2)-S_2(g_1,g_2)\circ\varPhi_0(sg_2),
\end{align*}
which is Equation~\eqref{eqn:2015b/10} for $k=2$, as contended. \end{proof}

We shall now digress briefly to recall some basic constructions concerning DG categories \cite{Kel94}. We remind the reader that in a DG category $\catfont A$ the hom-set between any two objects $A,B$ is a graded vector space \[\catfont A(A,B)^\bullet=\textstyle\bigoplus\limits_{n\in\Z}\catfont A(A,B)^{(n)}\] endowed with a linear operator $d_{A,B}$ of degree $+1$ such that $d_{A,B}\circ d_{A,B}=0$. The composition law in $\catfont A$ is required to be compatible with the grading and with the differentials: for $g\in\catfont A(B,C)$ homogeneous and for $f\in\catfont A(A,B)$,
\begin{equation*}
 d_{A,C}(gf)=(d_{B,C}g)f+(-1)^{\deg g}g(d_{A,B}f).
\end{equation*}
One also assumes that for every object $A$ the identity morphism $1_A$ is homogeneous of degree zero. It follows that $d_{A,A}(1_A)=0$.

For an arbitrary DG category $\catfont A$, one can consider the subcategory $\Zcat\catfont A$ consisting of all those (mixed degree) morphisms $f\in\catfont A(A,B)$ such that \[d_{A,B}f=0.\] Notice that if one expresses a morphism $f$ uniquely as the sum of its homogeneous components $\sum f^{(n)}$, then $f$ lies within $\Zcat\catfont A$ if and only if so does every $f^{(n)}$. The grading on $\catfont A$ therefore induces a grading on $\Zcat\catfont A$. Within $\Zcat\catfont A$, the homogeneous morphisms of degree zero constitute a subcategory $\Zcat^0\catfont A$. For each pair of objects $A,B$, the equivalence relation on the hom-set $\Zcat\catfont A(A,B)$
\begin{equation*}
 f_1\sim f_2\aeq f_2-f_1=d_{A,B}h\text{ \it for some $h\in\catfont A(A,B)$}
\end{equation*}
is compatible with the grading in the sense that $f_1\sim f_2$ if, and only if, $f_1^{(n)}\sim f_2^{(n)}$ for all $n$. This equivalence relation on the set of morphisms of the category $\Zcat\catfont A$ is also compatible with composition, so it is a categorical congruence. The resulting quotient category, which we designate $\Hcat\catfont A$, has hom-sets \[\Hcat\catfont A(A,B)=\Zcat\catfont A(A,B)/\mathord\sim.\] The grading on $\Zcat\catfont A$ descends to a grading on $\Hcat\catfont A$. We let $\Hcat^0\catfont A$ denote the subcategory of $\Hcat\catfont A$ formed by all degree zero morphisms. One defines the \emph{null-homotopic} morphisms $f\in\Zcat\catfont A(A,B)$ to be those of the form $f=d_{A,B}h$.

The homotopy representations of a groupoid $G$ naturally form a DG category \cite[Remark 3.8]{AC13}, here denoted $\HRep(G)$. By definition, a degree~$n$ morphism \[\varPhi\in\HRep(G)(E,F)^{(n)}\] between two homotopy representations $E=(E^\bullet,\{R_k\})$ and $F=(F^\bullet,\{S_k\})$ is a $C^\bullet(G)$-linear map of degree $n$ between the graded $C^\bullet(G)$-module $C(G,E)^\bullet$ and the graded $C^\bullet(G)$-module $C(G,F)^\bullet$. The differential of $\varPhi$ is defined to be
\begin{equation*}
 D_{E,F}\varPhi=D_F\circ\varPhi-(-1)^{\deg\varPhi}\varPhi\circ D_E,
\end{equation*}
where $D_E$ and $D_F$ denote the differential operators on $C(G,E)^\bullet$ and $C(G,F)^\bullet$ associated to $E$ and $F$, respectively. In the notations of \cite[Lemma~3.10]{AC13}, the effect of $\varPhi$ on any cochain $\eta\in C(G;E)$ can be written in the form
\begin{equation*}
 \varPhi(\eta)=\varPhi_0\star\eta+\varPhi_1\star\eta+\varPhi_2\star\eta+\dotsb,
\end{equation*}
each $\varPhi_k$ being a vector bundle morphism of $s_{k}^*E^\bullet$ into $t_{k}^*F^\bullet$ of degree $-k+n$. The following expression for the differential $D_{E,F}\varPhi$ can then be obtained by means of calculations analogous to those that in \cite{AC13} yield the identities \eqref{eqn:2015b/10}:
\begin{multline}
\label{eqn:2015b/12}
 (D_{E,F}\varPhi)_k(g_1,\dotsc,g_k)=\sum_{i+j=k}(-1)^{j\deg\varPhi}S_j(g_1,\dotsc,g_j)\circ\varPhi_i(g_{j+1},\dotsc,g_k)-{}%
\\%
\shoveright{%
 (-1)^{\deg\varPhi}\sum_{i+j=k}(-1)^j\varPhi_j(g_1,\dotsc,g_j)\circ R_i(g_{j+1},\dotsc,g_k)+{}%
}\\[-.5\baselineskip]%
 (-1)^{\deg\varPhi}\sum_{j=1}^{k-1}(-1)^j\varPhi_{k-1}(g_1,\dotsc,g_jg_{j+1},\dotsc,g_k).
\end{multline}
The notion of homomorphism of homotopy representations, which is characterized by the identities \eqref{eqn:2015b/10}, corresponds to the notion of morphism within the subcategory $\Zcat^0\HRep(G)\subset\HRep(G)$.

\begin{lem}\label{lem:2015b/7} Let $G\tto G_0$ be a proper groupoid, and let $E=(E^\bullet,\{R_k\})$ and $F=(F^\bullet,\{S_k\})$ be two homotopy representations of $G$ such that $R_0=0$ and $S_0=0$. Let $\varPhi:C(G,E)^\bullet\to C(G,F)^{\bullet+n}$ be a degree~$n$ morphism in $\HRep(G)$ between them, with $D_{E,F}\varPhi=0$. Suppose that $\varPhi_0=0$. Then $\varPhi$ is null homotopic. \end{lem}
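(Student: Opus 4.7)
The plan is to construct the null-homotopy $H=\{H_l\}_{l\geq0}$ inductively by means of a Haar-averaging integral formula modeled on \eqref{eqn:2015b/11}, exploiting the fact that the hypothesis $R_0=0$ makes $\lambda=R_1$ an honest representation, hence pointwise invertible with $\lambda(g)^{-1}=\lambda(g^{-1})$, and similarly for $\mu=S_1$.

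As a preliminary observation, under the hypotheses $R_0=0=S_0$ all terms of \eqref{eqn:2015b/12} that would pair $H_k$ with either $R_0$ or $S_0$ vanish identically, so $(D_{E,F}H)_k$ is completely determined by $H_0,\dotsc,H_{k-1}$ alone. In particular $(D_{E,F}H)_0=0$ automatically matches the hypothesis $\varPhi_0=0$ regardless of the choice of $H_0$, so the induction can be launched.

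Assuming $H_0,\dotsc,H_{l-1}$ have already been constructed so that $(D_{E,F}H)_k=\varPhi_k$ for every $k\leq l$, I would define $H_l$ by an expression of the form
\begin{equation*}
 H_l(g_1,\dotsc,g_l)=\pm\integral_{tg=sg_l}\bigl\{\varPhi_{l+1}(g_1,\dotsc,g_l,g)+\Xi_l(g_1,\dotsc,g_l,g)\bigr\}\circ\lambda(g)^{-1}dg,
\end{equation*}
where $\Xi_l$ is a polynomial expression in the previously constructed $H_0,\dotsc,H_{l-1}$ and in the higher coefficients $R_i,S_j$ ($i,j\geq2$), patterned term-by-term on the integrand of \eqref{eqn:2015b/11}. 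To verify the identity $(D_{E,F}H)_{l+1}=\varPhi_{l+1}$, I would expand the left-hand side using \eqref{eqn:2015b/12}, apply the left-invariant substitution $g\mapsto g_{l+1}g$ inside each integral (using that $\lambda$ is a true representation, so that $\lambda(g_{l+1}g)^{-1}\lambda(g_{l+1})\lambda(g)=id$), then invoke the cocycle identity $(D_{E,F}\varPhi)_{l+2}=0$ together with the inductive hypothesis to reduce the integrand to a form that, after normalization of the Haar system, leaves precisely the single residue $\varPhi_{l+1}(g_1,\dotsc,g_{l+1})$. This is the degree-shifted analogue of the computation carried out in detail for $l=0$ in the proof of Lemma~\ref{lem:2015b/6}; in our simplest instance $l=0$ it reduces to setting $H_0(x)=(-1)^n\integral_{tg=x}\varPhi_1(g)\circ\lambda(g)^{-1}dg$ and using the identity $\varPhi_1(g_1g_2)=\mu(g_1)\varPhi_1(g_2)+\varPhi_1(g_1)\lambda(g_2)$, which is implied by $(D_{E,F}\varPhi)_2=0$.

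The main obstacle is combinatorial: identifying the precise form of the correction term $\Xi_l$ (and of the overall sign) so that, after expansion of $(D_{E,F}H)_{l+1}$ and application of the cocycle identity for $\varPhi$, every contribution except $\varPhi_{l+1}$ itself cancels telescopically against the terms supplied by the inductive hypothesis. The algebraic ingredients — left invariance of Haar, the cocycle conditions \eqref{eqn:2011a/1} and $D_{E,F}\varPhi=0$, and the normalization of the Haar system — are inherited verbatim from the argument of Lemma~\ref{lem:2015b/6}, so no conceptually new idea beyond careful bookkeeping is required.
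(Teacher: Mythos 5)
Your core mechanism is exactly the paper's: average $\varPhi_{k+1}(g_1,\dotsc,g_k,g)$ against $\lambda(g)^{-1}$ over the Haar system and use $D_{E,F}\varPhi=0$ together with left invariance and normalization; your base case $H_0(x)=(-1)^n\integral_{tg=x}\varPhi_1(g)\circ\lambda(g)^{-1}\d g$, justified by the identity $\varPhi_1(g_1g_2)=\mu(g_1)\varPhi_1(g_2)+\varPhi_1(g_1)\lambda(g_2)$ extracted from $(D_{E,F}\varPhi)_2=0$, is precisely the paper's $\varPsi_0$ (your degree-dependent sign is in fact the kind of factor that \eqref{eqn:2015b/12}--\eqref{eqn:2015b/13} require). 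Where you diverge is in scope and organization. The paper proves the lemma only under the simplifying assumption $R_k=S_k=0$ for $k\neq1$ (declared to be the only case needed), and under that assumption your inductive scaffolding is unnecessary: since only $\lambda=R_1$ and $\mu=S_1$ survive, the component equations $(D_{E,F}\varPsi)_{k+1}=\varPhi_{k+1}$ completely decouple, your correction term $\Xi_l$ vanishes identically, and each $\varPsi_k$ is given in closed form by the plain integral of $\varPhi_{k+1}$ alone, as in \eqref{eqn:2015b/13} and the displayed formula following it. You instead aim at the general case (only $R_0=S_0=0$), setting up an induction modeled on \eqref{eqn:2015b/11}; your preliminary observation that $R_0=S_0=0$ makes $(D_{E,F}H)_k$ depend only on $H_0,\dotsc,H_{k-1}$ is correct and is what makes that scheme well posed. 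The one genuine shortfall is that you never identify the correction term $\Xi_l$ or carry out the cancellation in the inductive step, which is the entire combinatorial content of the general case; you should either write out $\Xi_l$ explicitly (it is the evident analogue of the integrand of \eqref{eqn:2015b/11}, with the previously built $H_j$ in place of $\varPhi_j$ and the $\varPhi_i$-terms of $D_{E,F}\varPhi=0$ absorbing the face maps) and check at least one nontrivial instance, or else do as the paper does and reduce explicitly to the case $R_k=S_k=0$ for $k\neq1$, where your argument is already complete at the same level of detail as the paper's.
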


\begin{proof} We shall give a proof under the simplifying assumption that $R_k$ and $S_k$ be equal to zero for $k\neq1$, which is the only case we are really interested in. The general case will be left to the reader. Let us set $\lambda=R_1$ and $\mu=S_1$. In view of \eqref{eqn:2015b/12}, we need to solve for $\varPsi_k$ in the following equation, for each value of $k=0,1,2,\dotsc$:
\begin{multline}
\label{eqn:2015b/13}
 \varPhi_{k+1}(g_1,\dotsc,g_{k+1})=(-1)^{n-1}\biggl[\mu(g_1)\circ\varPsi_k(g_2,\dotsc,g_{k+1})-{}%
\\[-.4\baselineskip]%
 (-1)^k\varPsi_k(g_1,\dotsc,g_k)\circ\lambda(g_{k+1})+\sum_{j=1}^k(-1)^j\varPsi_k(g_1,\dotsc,g_jg_{j+1},\dotsc,g_{k+1})\biggr].
\end{multline}
Using the equation $D_{E,F}\varPhi=0$ satisfied by $\varPhi$, it is easy to check that
\begin{equation*}
 \varPsi_k(g_1,\dotsc,g_k)=(-1)^k\integral_{tg=sg_k}\varPhi_{k+1}(g_1,\dotsc,g_k,g)\circ\lambda(g)^{-1}dg
\end{equation*}
must be a solution to \eqref{eqn:2015b/13}. \end{proof}

We shall make use of the following abbreviations.
\begin{gather*}
 \DRep(G)=\Hcat\bigl(\HRep(G)\bigr)
\\%
 \DRep^0(G)=\Hcat^0\bigl(\HRep(G)\bigr)
\end{gather*}
(Cf.~the discussion following Definition~3.9 and 3.27 in \cite{AC13}.) Suppose that $G$ is a group. In this case we have a \emph{cohomology representation functor}
\begin{equation}
\label{eqn:2015b/14}
 R:\Zcat\HRep(G)\longto\Rep(G)
\end{equation}
which to each homotopy representation $E$ associates the corresponding cohomology representation $\bigoplus H^i(E^\bullet,\partial_E)$ and to each degree~$n$ morphism \[\varPhi:C(G,E)^\bullet\longto C(G,F)^{\bullet+n}\] satisfying the condition $D_{E,F}\varPhi=0$ associates the homomorphism of cohomology representations that is induced by $\varPhi_0$ in virtue of the equations \eqref{eqn:2015b/12} for $k=0,1$. The same equations show that if $\varPhi=D_{E,F}\varPsi$ for some $\varPsi$ of degree $n-1$ then $\varPhi_0$ must induce the zero map in cohomology. The functor \eqref{eqn:2015b/14} therefore factors through the homotopy category $\DRep(G)$. We refer to the resulting functor of $\DRep(G)$ into $\Rep(G)$ still as the cohomology representation functor $R$.

There is a tensor product on $\DRep(G)$, canonical up to natural equivalence of symmetric monoidal categories, which makes $R$ into a monoidal functor, and which is induced by a corresponding ``$\infty$-monoidal structure'' on the DG category $\HRep(G)$. We refer the reader to \cite{ACD11} for details.

\begin{prop}\label{prop:2015b/8} For any compact group $G$, the cohomology representation functor
\begin{equation}
\label{eqn:2015b/15}
 R:\DRep(G)\longto\Rep(G)
\end{equation}
is an equivalence of (symmetric monoidal) categories. \end{prop}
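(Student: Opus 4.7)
The plan is to reduce the problem, by a homological perturbation argument adapted to the compact group $G$, to the case of ``pure'' homotopy representations (those of the form $\bar V=(V^\bullet, R_0=0, R_1=\text{action}, R_k=0\text{ for }k\geq 2)$ associated to an honest graded $G$-representation $V^\bullet$); on such objects the two preceding lemmas will immediately give full faithfulness.

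The first step handles essential surjectivity: any finite-dimensional graded $G$-rep $V^\bullet$ is the cohomology of the pure homotopy rep $\bar V$ that it itself defines, so every isomorphism class in $\Rep(G)$ is hit by $R$. The second, and substantive, step is to show that every homotopy representation $E=(E^\bullet,\{R_k\})$ is actually isomorphic in $\DRep(G)$ to the pure homotopy representation $\bar V$ built from $V^\bullet:=H^\bullet(E^\bullet,\partial_E)$ equipped with the induced honest $G$-action. Compactness enters in two distinct ways here. First, because $\lambda=R_1$ consists of honest chain endomorphisms of $(E^\bullet,\partial_E)$ by \eqref{eqn:2011a/3}, I can Haar-average an arbitrary chain-level deformation retract $E^\bullet\to V^\bullet$ to obtain a $G$-equivariant chain-level quasi-isomorphism $\varPhi_0\colon E^\bullet\to V^\bullet$ relative to the induced action on cohomology. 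Second, I would then inductively construct the higher components $\varPhi_1,\varPhi_2,\ldots$ of a degree-zero cycle $\varPhi\in\Zcat^0\HRep(G)(E,\bar V)$ via integral formulas patterned after \eqref{eqn:2015b/11}; at each stage the obstruction to extending the tower is a continuous cocycle of $G$ with values in some $\Hom$-bundle, and compactness guarantees its exactness. The resulting $\varPhi$ induces the identity on cohomology, hence is invertible in $\DRep(G)$.

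Granted this reduction, full faithfulness follows immediately. Given pure representations $\bar V,\bar W$, Lemma~\ref{lem:2015b/6} (applied in the straightforward degree-$n$ generalization) shows that every $G$-equivariant graded map $V^\bullet\to W^{\bullet+n}$ extends to a degree-$n$ cycle $\varPhi\in\Zcat\HRep(G)(\bar V,\bar W)$, which supplies fullness of $R$. Lemma~\ref{lem:2015b/7} shows that any cycle $\varPhi$ with $\varPhi_0=0$ is null-homotopic, which, applied to the difference of two cycles with the same degree-zero component, supplies faithfulness. The monoidal claim then follows at no extra cost: the $\infty$-tensor product of \cite{ACD11} restricts on pure objects to the ordinary graded tensor product of $G$-representations, so $R$ intertwines the two symmetric monoidal structures on the nose on a skeleton of $\DRep(G)$.

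The main obstacle is the reduction step. All the ingredients—Haar integration, the left-invariance of the Haar system, and the integral formulas already exploited in the proofs of Lemmas~\ref{lem:2015b/6}–\ref{lem:2015b/7}—are available, but assembling them into a single simultaneous inductive construction of the whole tower $\varPhi_0,\varPhi_1,\ldots$ in the presence of a genuinely nonzero $R_0=\partial_E$, and verifying vanishing of the obstruction at each stage, is the delicate part of the argument. Once this is in place, the remaining steps are essentially formal.
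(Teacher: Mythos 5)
Your overall architecture coincides with the paper's: reduce to the full subcategory of ``pure'' objects (ordinary graded representations viewed as homotopy representations with $R_0=0$ and $R_k=0$ for $k\geq2$), then obtain fullness and faithfulness there from Lemmas~\ref{lem:2015b/6} and~\ref{lem:2015b/7}. The essential surjectivity, the faithfulness argument, and the monoidal remark are all fine and match the paper. The problem is the reduction step itself\textemdash that every object of $\HRep(G)$ is isomorphic in $\DRep(G)$ to the pure representation on its cohomology. You correctly identify this as the substantive point, but you do not prove it: you assert that a chain-level retraction can be made $G$-equivariant by Haar averaging and that ``compactness guarantees exactness'' of the stepwise obstructions, and then you explicitly concede that assembling the tower $\varPhi_0,\varPhi_1,\dotsc$ in the presence of $R_0=\partial_E\neq0$ is the unresolved ``main obstacle''. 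The paper does not reprove this either; it invokes Theorem~3.32 of \cite{AC13} (every homotopy representation of a proper groupoid is quasi-isomorphic to one with vanishing differential), combined with Proposition~3.28 of \cite{AC13} (quasi-isomorphisms are invertible in the homotopy category), and only then uses Lemma~\ref{lem:2015b/6} to pass from a zero-differential object to its cohomology. Without that citation or a worked-out substitute, your proof is incomplete at its central point.

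Moreover, the tools you propose to adapt do not apply where you need them. The integral formula \eqref{eqn:2015b/11} and the homotopy constructed in Lemma~\ref{lem:2015b/7} are derived under the hypotheses $R_0=S_0=0$: they use that $\lambda=R_1$ is an honest representation, so that $\lambda(g)^{-1}=\lambda(g^{-1})$ exists and left invariance of the Haar measure can be exploited, and that no $\partial$-terms enter the identities. When $\partial_E\neq0$, the operator $\lambda(g)$ is in general neither multiplicative nor invertible (homotopy representations need not be strongly invertible), so the averaging operator $\integral(\,\cdot\,)\circ\lambda(g)^{-1}\,dg$ is not even defined, and the naive conjugation-average of a chain retraction $E^\bullet\to H^\bullet(E^\bullet,\partial_E)$ need not be equivariant, since $\lambda$ is an action only up to the homotopy $\Psi$. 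Indeed, a degree-zero cycle $E\to\bar V$ only requires $\varPhi_0$ to intertwine the actions up to a homotopy controlled by $\varPhi_1$, and solving this whole coupled hierarchy is precisely the homological-perturbation content of \cite[Theorem~3.32]{AC13}. Either cite that result, as the paper does, or carry out the perturbation argument in detail; as written, the reduction is a genuine gap. (As a minor point, fullness on pure objects needs no extension lemma at all: a strictly equivariant graded map, with all higher components set to zero, is already a cycle.)
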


\begin{proof} Since every ordinary representation of $G$ can be regarded as an object of the category $\HRep(G)$, the cohomology representation functor is trivially surjective on objects. Next, by Theorem~3.32 of \cite{AC13}, any object of $\HRep(G)$ is quasi isomorphic\textemdash equivalently, isomorphic within the homotopy category \[\DRep^0(G)\subset\DRep(G)\]\cite[Proposition~3.28]{AC13}\textemdash to a homotopy representation which has zero differential. By our Lemma~\ref{lem:2015b/6}, in combination with Proposition~3.28 of loc.cit., any such representation must in turn be quasi isomorphic to an ordinary graded representation (namely the associated cohomology representation). Hence, in order to show that the functor \eqref{eqn:2015b/15} is fully faithful, it will be enough to show that such is its restriction to the full subcategory whose objects are the ordinary graded representations. An arbitrary morphism $\varPhi$ between any two such objects uniquely decomposes as $\varPhi=\bigoplus\varPhi^{(n)}$, with each $\varPhi^{(n)}$ homogeneous of degree $n$, and one has $R(\varPhi)=\bigoplus R(\varPhi^{(n)})$. Faithfulness is then an immediate consequence of our Lemma~\ref{lem:2015b/7}. Fullness is obvious. \end{proof}

The same proof also shows that there is an equivalence of categories
\begin{equation*}
 \DRep^0(G)\simeq\Rep^{(\Z)}(G),
\end{equation*}
where $\Rep^{(\Z)}(G)$ stands for the category of $\Z$-graded representations of $G$. Observe that there is a canonical equivalence of symmetric tensor categories
\begin{equation*}
 \Rep^{(\Z)}(G)\simeq\Rep\bigl(\GU(1)\times G\bigr).
\end{equation*}
This can easily be understood in terms of the tannakian formalism as follows. (Compare \cite[p.~186, (5.1a)\textendash(5.1d)]{DM82}.) By a \emph{tannakian $\Z$-grading} on an additive complex symmetric tensor category with conjugation $(\catfont C,\otimes)$ we mean a direct sum decomposition of each object $X$ of $\catfont C$ \[X=\textstyle\bigoplus\limits_{n\in\Z}X^{(n)}\] which is functorial in $X$ and compatible with the tensor product \[(X\otimes Y)^{(n)}=\textstyle\bigoplus\limits_{r+s=n}X^{(r)}\otimes Y^{(s)}\] and with the conjugation. For an arbitrary compact group $G$, there is a one-to-one correspondence between tannakian $\Z$-gradings on the representation category $\Rep(G)$ and central homomorphisms $\varepsilon:\GU(1)\to G$. Namely, given $\varepsilon$, for each representation $\rho:G\to\GL(V)$ one defines $V^{(n)}$ to be the $n$-th isotypical summand for the $\GU(1)$-representation $\rho\circ\varepsilon:\GU(1)\to\GL(V)$ [that is, the subspace where $\GU(1)$ acts by the complex character $z\mapsto z^n$] and then $\rho^{(n)}$ to be the representation induced by $\rho$ on the $G$-invariant subspace $V^{(n)}\subset V$.

The fundamental importance of $\GU(1)$~central extensions in quantum physics is stressed in \cite{TW87}.

\begingroup
\let\bibsection\subsection
\def\section{\bibsection}

\endgroup
\end{document}